\def\p{\partial}
\def\R{\mathbb{R}}
\def\vv<#1>{\langle#1\rangle}
\def\1{\mathbf{1}}
\def\Mod{{\rm mod}}
\def\XXint#1#2{\setbox0=\hbox{$#1{#2}{\int}$}{#2}\kern-.5\wd0 }
\def\XXint#1#2#3{{\setbox0=\hbox{$#1{#2#3}{\int}$}
     \vcenter{\hbox{$#2#3$}}\kern-.5\wd0}}
\def\vv<#1>{{\left\langle#1\right\rangle}}
\def\CD{{\rm CD}}
\def\Deg{{\rm Deg}}
\def\Lip{{\rm Lip}}
\newtheorem{thm}{Theorem}[section]
\newtheorem{cor}{Corollary}[section]
\theoremstyle{definition}
\theoremstyle{remark}
\newtheorem{rem}{Remark}[section]
\numberwithin{equation}{section}
\begin{document}
\title{Comparison of Steklov eigenvalues and Laplacian eigenvalues on graphs}

\author{Yongjie Shi$^1$}
\address{Department of Mathematics, Shantou University, Shantou, Guangdong, 515063, China}
\email{yjshi@stu.edu.cn}
\author{Chengjie Yu$^2$}
\address{Department of Mathematics, Shantou University, Shantou, Guangdong, 515063, China}
\email{cjyu@stu.edu.cn}
\thanks{$^1$Research partially supported by NNSF of China with contract no. 11701355. }
\thanks{$^2$Research partially supported by GDNSF with contract no. 2021A1515010264 and NNSF of China with contract no. 11571215.}
\renewcommand{\subjclassname}{%
  \textup{2010} Mathematics Subject Classification}
\subjclass[2010]{Primary 05C50; Secondary 39A12}
\date{}
\keywords{Steklov eigenvalue, Laplacian eigenvalue, eigenvalue comparison}
\begin{abstract}
In this paper, we obtain a comparison of Steklov eigenvalues and Laplacian eigenvalues on graphs and discuss its rigidity. As applications of the comparison of eigenvalues, we obtain  Lichnerowicz-type estimates  and some combinatorial estimates for Steklov eigenvalues on graphs.
\end{abstract}
\maketitle\markboth{Shi \& Yu}{Comparison of Steklov eigenvalues and Laplacian eigenvalues}
\section{Introduction}

On a compact Riemannian manifold with boundary, the Dirichlet-to-Neumann map or Steklov operator sends the Dirichlet boundary data of a harmonic function on the manifold to its Neumann boundary data. The eigenvalues of the Dirichlet-to-Neumann map or Steklov operator are called Steklov eigenvalues of the Riemannian manifold. This subject was first introduced by Steklov \cite{KK,ST} when considering liquid sloshing. It was later found useful in applied mathematics, especially in electrical impedance tomography for medical imaging (see \cite{UL}).

In the discrete setting, Dirichlet-to-Neumann maps and Steklov eigenvalues were introduced in \cite{HHW,HM} and received attentions recently (see \cite{HH,HHW2,Pe,Pe2,SY,SY2}). In this paper, we obtain a comparison of Steklov eigenvalues and Laplacian eigenvalues on graphs. It seems that this is a major difference of Steklov eigenvalues on graphs with that on Riemannian manifolds. Such a comparison was also mentioned in \cite{HHW2} for graphs equipped with normalized weights. One of the application of this comparison of eigenvalues is in obtaining Lichnerowicz-type estimate for Steklov eigenvalues which in some sense extend Escobar's conjecture (see \cite{ES}) from the smooth case to discrete case. Escobar's conjecture was recently partially solved by Xiong-Xia \cite{XX}.

Let's recall some notations and notions on graphs before stating our main results.

A weighted graph is a triple $(G,m,w)$ where $G$ is a graph, $m$ is the vertex measure which is a positive function on the set $V(G)$ of the vertices of $G$ and $w$ is the edge weight which is a positive function on the set $E(G)$ of the edges  of $G$. For convenience, we view $w$ as a symmetric function on $V\times V$ by zero extension and we simply write $m(x)$ and $w(x,y)$ as $m_x$ and $w_{xy}$ respectively. Throughout this paper, the graph $G$ is assumed to be finite, simple and connected. We will also simply write $V(G)$ and $E(G)$ as $V$ and $E$ if no confusion was made. For $A,B\subset V$, $E(A,B)$ denotes the set of edges in $G$ with one end point in $A$ and the  other in  $B$. We call the weight with $m\equiv 1$ and $w\equiv 1$ a unit weight. For each $x\in V$,
\begin{equation}
\Deg(x):=\frac{1}{m_x}\sum_{y\in V}w_{xy}
\end{equation}
is called the weighted degree of $x$. If for any $x\in V$, $\Deg(x)=1$, the graph is called a graph with a normalized unit weight.

Let $(G,m,w)$ be a weighted graph. Let $A^0(G)$ be the space of functions on $V$ and $A^1(G)$ be the space of skew-symmetric functions $\alpha$ on $V\times V$ such that $\alpha(x,y)=0$ when $x\not\sim y$. Equip $A^0(G)$ and $A^1(G)$ with the natural inner products
\begin{equation}
\vv<u,v>=\sum_{x\in V}u(x)v(x)m_x
\end{equation}
and
\begin{equation}
\vv<\alpha,\beta>=\sum_{\{x,y\}\in E}\alpha(x,y)\beta(x,y)w_{xy}=\frac12\sum_{x,y\in V}\alpha(x,y)\beta(x,y)w_{xy}
\end{equation}
respectively. For any $u\in A^0(G)$, define the differential $du$ of $u$ as
\begin{equation}
du(x,y)=\left\{\begin{array}{ll}u(y)-u(x)&\{x,y\}\in E\\0&\mbox{otherwise.}\end{array}\right.
\end{equation}
Let $d^*:A^1(G)\to A^0(G)$ be the adjoint operator of $d:A^0(G)\to A^1(G)$. The Laplacian operator on $A^0(G)$ is defined as
\begin{equation}
\Delta=-d^*d.
\end{equation}
By direct computation,
\begin{equation}
\Delta u(x)=\frac{1}{m_x}\sum_{y\in V}(u(y)-u(x))w_{xy}
\end{equation}
for any $x\in V$. Moreover, by the definition of $\Delta$, it is clear that
\begin{equation}\label{eq-integration-by-part}
\vv<\Delta u,v>=-\vv<du,dv>
\end{equation}
for any $u,v\in \R^V$. So $-\Delta$ is a nonnegative self-adjoint operator on $A^0(G)=\R^V$. Let
\begin{equation}
0=\mu_1<\mu_2\leq \cdots \leq \mu_{|V|}
\end{equation}
be the eigenvalues of $-\Delta$ on $(G,m,w)$. It is clear that $\mu_1=0$ because constant functions are the corresponding eigenfunctions and $\mu_2>0$ because we always assume that $G$ is connected.

Next  recall the notion of graphs with boundary.  A pair $(G,B)$ is said to be a graph with boundary if $G$ is graph and $B\subset V(G)$ such that (i) any two vertices in $B$ are not adjacent, (ii) any vertex in $B$ is adjacent to some vertex
in $\Omega:=V\setminus B$. The set $B$ is called the vertex-boundary of $(G,B)$ and the set $\Omega$ is called the vertex-interior of $(G,B)$. An edge joining a boundary vertex and an interior vertex is called a boundary edge. The induced graph of $G$ on $\Omega$ is denoted as $G|_\Omega$.

Let $(G,m,w,B)$ be a weighted graph with boundary. For any $u\in \R^V$ and $x\in B$, define the normal derivative of $u$ at $x$ as:
\begin{equation}\label{eq-normal-derivative}
\frac{\p u}{\p n}(x):=\frac{1}{m_x}\sum_{y\in V}(u(x)-u(y))w_{xy}=-\Delta u(x).
\end{equation}
Then, by \eqref{eq-integration-by-part}, one has the following Green's formula:
\begin{equation}\label{eq-Green}
\vv<\Delta u,v>_\Omega=-\vv<du,dv>+\vv<\frac{\p u}{\p n},v>_B.
\end{equation}
Here, for any set $S\subset V$,
\begin{equation}
\vv<u,v>_S:=\sum_{x\in S}u(x)v(x)m_x.
\end{equation}
Similarly, for any $S\subset E$,
\begin{equation}
\vv<\alpha,\beta>_S:=\sum_{\{x,y\}\in S}\alpha(x,y)\beta(x,y)w_{xy}.
\end{equation}

We are now ready to introduce the notions of Dirichlet-to-Neumann map and Steklov eigenvalues on graphs.   For each $f\in \R^B$, let $u_f$ be the harmonic extension of $f$ into $\Omega$:
\begin{equation}
\left\{\begin{array}{ll}\Delta u_f(x)=0&x\in\Omega\\
u_f(x)=f(x)&x\in B.
\end{array}\right.
\end{equation}
Define the Dirichlet-to-Neumann map $\Lambda:\R^B\to \R^B$ as
\begin{equation}
\Lambda(f)=\frac{\p u_f}{\p n}.
\end{equation}
By \eqref{eq-Green},
\begin{equation}
\vv<\Lambda(f),g>_B=\vv<du_f,du_g>
\end{equation}
for any $f,g\in \R^B$. This implies that $\Lambda$ is a nonnegative self-adjoint operator on $\R^B$. Let
\begin{equation}
0=\sigma_1<\sigma_2\leq\cdots\leq \sigma_{|B|}
\end{equation}
be the eigenvalues of $\Lambda$. It is clear that $\sigma_1=0$ because constant functions are the corresponding eigenfunctions and $\sigma_2>0$ because we always assume that $G$ is connected.

We are now ready to state the first main result of this paper, a comparison of the the Steklov eigenvalues and the Laplacian eigenvalues on graphs, and its rigidity.
\begin{thm}\label{thm-comparison}
Let $(G,m,w,B)$ be a connected weighted finite graph with boundary. Then,
\begin{equation}\label{eq-comparison}
\sigma_i\geq \mu_i
\end{equation}
for $i=1,2,\cdots, |B|$. If $\sigma_i=\mu_i$ for some $i=2,3,\cdots,|B|$, then there is an eigenfunction $v_i$ of $\mu_i$ such that $v_i|_\Omega=\Delta v_i|_\Omega=0$.  Moreover, the quality of \eqref{eq-comparison} holds for all $i=1,2,\cdots, |B|$ if and only if the following statements are true:

(1) There is a nonnegative function $\rho \in \R^\Omega$, such that
\begin{equation}\label{eq-w-split}
w_{xy}=\rho_ym_xm_y
\end{equation}
for each $x\in B$ and $y\in \Omega$. In particular, each interior vertex is either adjacent to all boundary vertices or adjacent to no boundary vertices. %Conversely, if \eqref{eq-w-split} holds,
%\begin{equation}
%\sigma_2=\sigma_3=\cdots=\sigma_{|B|}=\Deg,
%\end{equation}

(2) For any $u\in\R^\Omega$ with $\vv<u,1>_\Omega=0$,
\begin{equation}\label{eq-nonnegative}
\vv<du,du>_\Omega-\Deg\vv<u,u>_\Omega+V_B\vv<\rho u,u>_\Omega-\frac{V_G}{\Deg}\vv<\rho,u>_\Omega^2\geq 0
\end{equation}
Here, $$\Deg:=\vv<\rho,1>_\Omega=\Deg(x)$$ for any $x\in B$,
 $V_B=\sum_{x\in B}m_x$, $V_\Omega=\sum_{y\in\Omega}m_y$, $V_G=V_B+V_\Omega$, and
 $$\vv<du,du>_\Omega:=\vv<du,du>_{E(\Omega,\Omega)}.$$
\end{thm}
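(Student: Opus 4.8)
The plan is to derive \eqref{eq-comparison} from the Courant--Fischer min-max principle together with the Dirichlet principle, and then to extract the rigidity by tracking the equality case in each estimate. Recalling that $-\Delta$ is self-adjoint for $\vv<\cdot,\cdot>$, that $\Lambda$ is self-adjoint for $\vv<\cdot,\cdot>_B$, and that $\vv<\Lambda f,f>_B=\vv<du_f,du_f>$, I would write
\[
\mu_i=\min_{\dim W=i}\ \max_{0\neq u\in W}\frac{\vv<du,du>}{\vv<u,u>},\qquad \sigma_i=\min_{W\subset\R^B,\ \dim W=i}\ \max_{0\neq f\in W}\frac{\vv<du_f,du_f>}{\vv<f,f>_B}.
\]
Given an $i$-dimensional minimizer $W\subset\R^B$ for $\sigma_i$, push it into $\R^V$ by harmonic extension to $\tilde W=\{u_f:f\in W\}$, which is again $i$-dimensional since $f\mapsto u_f$ is injective. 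Since $\vv<u_f,u_f>=\vv<f,f>_B+\vv<u_f,u_f>_\Om\geq\vv<f,f>_B$, the Laplacian quotient of $u_f$ is at most the Steklov quotient of $f$, so feeding $\tilde W$ into the min-max for $\mu_i$ gives $\mu_i\leq\max_{f\in W}\vv<du_f,du_f>/\vv<u_f,u_f>\leq\max_{f\in W}\vv<du_f,du_f>/\vv<f,f>_B=\sigma_i$, which is \eqref{eq-comparison}.

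For equality at a single index $i\geq2$, apply the chain above to the minimizer $W_i=\mathrm{span}(f_1,\dots,f_i)$; then $\sigma_i=\mu_i$ forces both middle inequalities to be equalities, so in particular $\max_{f\in W_i}\vv<du_f,du_f>/\vv<u_f,u_f>=\sigma_i$. Let $f^\sharp\in W_i$ attain this maximum. From $\sigma_i=\vv<du_{f^\sharp},du_{f^\sharp}>/\vv<u_{f^\sharp},u_{f^\sharp}>\leq\vv<du_{f^\sharp},du_{f^\sharp}>/\vv<f^\sharp,f^\sharp>_B\leq\sigma_i$ I read off two facts: the pointwise equality forces $\vv<u_{f^\sharp},u_{f^\sharp}>_\Om=0$, i.e. $u_{f^\sharp}|_\Om=0$, and $f^\sharp$ is a $\sigma_i$-eigenfunction of $\Lambda$. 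Setting $v_i:=u_{f^\sharp}\neq0$, this function vanishes on $\Om$, is harmonic on $\Om$ (so $\Delta v_i|_\Om=0$), and on $B$ satisfies $-\Delta v_i=\Lambda f^\sharp=\sigma_i f^\sharp=\sigma_i v_i$; since $v_i|_\Om=0$ the identity $-\Delta v_i=\sigma_i v_i=\mu_i v_i$ holds on all of $V$, which is the claimed eigenfunction.

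For the full equality I would work on the $|B|$-dimensional space $\mathcal H\subset\R^V$ of functions harmonic on $\Om$, comparing the form $u\mapsto\vv<du,du>$ under the two inner products $\vv<\cdot,\cdot>$ and $\vv<\cdot,\cdot>_B$. Writing $\nu_1\le\cdots\le\nu_{|B|}$ for its eigenvalues under $\vv<\cdot,\cdot>$, restriction to the subspace $\mathcal H$ gives $\nu_i\geq\mu_i$, while $\vv<\cdot,\cdot>\geq\vv<\cdot,\cdot>_B$ on $\mathcal H$ gives $\nu_i\leq\sigma_i$; hence full equality forces $\nu_i=\sigma_i$ for all $i$. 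I then peel eigenfunctions from the top: for the current largest positive eigenvalue, any $\vv<\cdot,\cdot>$-maximizer $\tilde u$ satisfies $\vv<d\tilde u,d\tilde u>/\vv<\tilde u,\tilde u>_B\geq\vv<d\tilde u,d\tilde u>/\vv<\tilde u,\tilde u>$, which together with $\max\,\vv<d\cdot,d\cdot>/\vv<\cdot,\cdot>_B=\nu=\sigma$ forces $\vv<\tilde u,\tilde u>_\Om=0$; such a $\tilde u$ vanishes on $\Om$, is therefore a common eigenfunction for both inner products, and passing to its (common) orthogonal complement the argument repeats. Since only the eigenvalue $0$ (the constants) is non-positive, this produces $|B|-1$ orthogonal harmonic functions vanishing on $\Om$, so $\dim K\geq|B|-1$, where $K=\{f\in\R^B:\sum_{x\in B}w_{xy}f(x)=0\text{ for all }y\in\Om\}$ is exactly the set of boundary data whose harmonic extension vanishes on $\Om$. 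Because each boundary vertex has an interior neighbour, the map $f\mapsto\big(\sum_x w_{xy}f(x)\big)_{y}$ is nonzero, so it has rank one and $w_{xy}=a_y b_x$ with every $b_x\neq0$; then $K=\{f:\sum_x b_x f(x)=0\}$. Finally, for $f\in K$ one has $\Lambda f(x)=\Deg(x)f(x)$, so the $|B|-1$ eigenfunctions above are eigenvectors of the diagonal operator $\mathrm{diag}(\Deg(x))$ whose span is the hyperplane $\{\sum_x b_xf(x)=0\}$; a hyperplane with all $b_x\neq0$ is invariant under a symmetric diagonal operator only when that operator is scalar, so $\Deg(x)\equiv\Deg$, which forces $b_x\propto m_x$ and gives the product structure \eqref{eq-w-split}, i.e. statement (1).

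It remains to link (1) and (2) to equality. Under \eqref{eq-w-split} the boundary coupling block is rank one, and a direct computation of the Dirichlet-to-Neumann operator gives $\sigma_2=\cdots=\sigma_{|B|}=\Deg$ (with $\sigma_1=0$ on the constants); hence, given (1), full equality is equivalent to $\mu_2\geq\Deg$. To match this with \eqref{eq-nonnegative} I would write any $u\in\R^V$ with $\vv<u,1>=0$ in its boundary part $f$ and interior part $g$, expand $\vv<du,du>-\Deg\vv<u,u>$ using \eqref{eq-w-split}, observe that the boundary-energy term and $\Deg\vv<f,f>_B$ cancel, leaving an expression depending on $f$ only through $\vv<f,1>_B=-\vv<g,1>_\Om$, and then minimize over the component of $g$ along the constants; this minimization produces precisely the quadratic form in \eqref{eq-nonnegative}, so $\mu_2\geq\Deg$ is equivalent to (2). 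The comparison and the single-index rigidity are short; I expect the main obstacle to be the forward direction of the full characterization, namely the peeling step (which must respect eigenvalue multiplicities and two different inner products at once) that delivers enough boundary-supported harmonic functions to force the rank-one/constant-degree structure \eqref{eq-w-split}, after which the passage to \eqref{eq-nonnegative} is the bookkeeping computation sketched above.
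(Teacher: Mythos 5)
Your proposal is correct, and while your comparison $\sigma_i\geq\mu_i$ and the single-index rigidity run on the same min-max mechanism as the paper, your treatment of the full rigidity is organized genuinely differently. The paper constructs, bottom-up by induction, functions $\tilde v_1=1,\tilde v_2,\dots,\tilde v_{|B|}$ that are simultaneously Steklov and Laplacian eigenfunctions, vanish on $\Omega$, and are orthogonal in both inner products; since the traces $\tilde v_2|_B,\dots,\tilde v_{|B|}|_B$ then span exactly the hyperplane $\{f\in\R^B:\vv<f,1>_B=0\}$, each functional $f\mapsto\sum_{x\in B}w_{xy}f(x)$ vanishes on that hyperplane and is therefore a multiple of $f\mapsto\vv<f,1>_B$, which yields $w_{xy}=\kappa_y m_x$ in a single stroke. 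You instead introduce the intermediate spectrum $\nu_i$ of the Dirichlet form on the space $\mathcal H$ of harmonic-on-$\Omega$ functions, squeeze $\mu_i\leq\nu_i\leq\sigma_i$, and peel maximizers from the top; this gives you only that the traces span some hyperplane $K=\{f:\sum_{x\in B}b_xf(x)=0\}$ with all $b_x\neq 0$, so you need two further steps (the rank-one structure $w_{xy}=a_yb_x$, then invariance of $K$ under $\Lambda|_K=\mathrm{diag}(\Deg(x))$ forcing the weighted degree to be constant) to recover $b_x\propto m_x$ and hence \eqref{eq-w-split}; both steps are valid as you state them. Note, though, that your extra work is avoidable: your peeled functions have positive eigenvalue, hence are automatically orthogonal to the constants in $\vv<\cdot,\cdot>$ and (since they vanish on $\Omega$) in $\vv<\cdot,\cdot>_B$, which already identifies $K=\{f:\vv<f,1>_B=0\}$ and lets the paper's one-step conclusion apply. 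What your route buys in exchange is that the squeeze $\mu_i\leq\nu_i\leq\sigma_i$ makes the peeling transparent and spares you the four-condition induction hypothesis the paper has to carry. Finally, for condition (2) the two arguments are the same discriminant computation in different packaging: the paper restricts to test functions with $v|_B$ constant, splits into the cases $v|_B\equiv 0$ and $v|_B\equiv 1$, and scales by $\lambda$; you reduce full equality (given (1)) to the single inequality $\mu_2\geq\Deg$, decompose the interior part into a mean-zero function plus a constant, and minimize over that constant, landing exactly on \eqref{eq-nonnegative} --- arguably the cleaner organization.
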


The eigenvalue comparison \eqref{eq-comparison} is almost an obvious observation from definitions as follows. Note that the Laplacian eigenvalues $\mu_i$'s can be obtained by applying Courant's min-max principle to the Rayleigh quotient:
\begin{equation}
R[u]=\frac{\vv<du,du>}{\vv<u,u>}
\end{equation}
and the Steklov eigenvalues $\sigma_i$'s can be obtained by applying Courant's min-max principle to the Rayleigh quotient:
\begin{equation}
R_{\sigma}[u]=\frac{\vv<du,du>}{\vv<u,u>_B}.
\end{equation}
It is clear that
\begin{equation}
R_{\sigma}[u]\geq R[u].
\end{equation}
Then, the eigenvalue comparison \eqref{eq-comparison} follows directly from Courant's min-max principle. The comparison \eqref{eq-comparison} was also mentioned in \cite[Corollary 1.6]{HHW2} for normalized weighted graphs. Our contribution here is characterizing the rigidity of \eqref{eq-comparison}.

As a direct consequence of \eqref{eq-nonnegative}, we have the following sufficient condition for \eqref{eq-comparison} to hold for all $i=1,2,\cdots, |B|$ on general weighted graphs.
\begin{cor}\label{cor-general}
Let $(G,m,w,B)$ be a connected weighted finite graph, suppose that for any $x\in B$ and $y\in \Omega$, $w_{xy}=\rho_ym_x m_y$ for some nonnegative function $\rho\in \R^\Omega$, and
\begin{equation}\label{eq-cor-general}
\mu_2(\Omega)\geq \Deg-V_B\rho_{\min}+\frac{V_G}{\Deg}\vv<\rho,\rho>_\Omega
\end{equation}
where $\mu_2(\Omega)$ is the second Laplacian eigenvalue of the induced graph of $G$ on $\Omega$, $$\Deg:=\vv<\rho,1>_\Omega=\Deg(x)$$ for any $x\in B$, and $\rho_{\min}=\min_{y\in\Omega}\rho_y$. Then, $\sigma_i=\mu_i$ for all $i=1,2,\cdots, |B|$.
\end{cor}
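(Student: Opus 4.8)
The plan is to check that the hypotheses of the corollary force the two equality conditions in Theorem~\ref{thm-comparison}, so that the conclusion $\sigma_i=\mu_i$ for all $i$ follows immediately. Condition~(1) of that theorem is exactly the standing assumption $w_{xy}=\rho_y m_x m_y$, so it holds for free; I would only remark that $\Deg=\vv<\rho,1>_\Omega>0$, since every boundary vertex is adjacent to some interior vertex and hence $\rho_y>0$ for at least one $y$. Everything therefore reduces to verifying the inequality \eqref{eq-nonnegative} of condition~(2) for each $u\in\R^\Omega$ with $\vv<u,1>_\Omega=0$.

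To verify \eqref{eq-nonnegative}, I would first bound the interior Dirichlet energy from below by the spectral gap of the induced graph $G|_\Omega$. By Courant's min-max principle applied to $G|_\Omega$, any $u$ orthogonal to the constants on $\Omega$ satisfies
$$\vv<du,du>_\Omega\geq \mu_2(\Omega)\vv<u,u>_\Omega.$$
Inserting the hypothesis \eqref{eq-cor-general} on $\mu_2(\Omega)$ and cancelling the two $\Deg\vv<u,u>_\Omega$ contributions, the left-hand side of \eqref{eq-nonnegative} is bounded below by
$$V_B\left(\vv<\rho u,u>_\Omega-\rho_{\min}\vv<u,u>_\Omega\right)+\frac{V_G}{\Deg}\left(\vv<\rho,\rho>_\Omega\vv<u,u>_\Omega-\vv<\rho,u>_\Omega^2\right).$$

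It then remains to see that both grouped terms are nonnegative. The first equals $\sum_{y\in\Omega}(\rho_y-\rho_{\min})u(y)^2 m_y$, which is nonnegative because $\rho_y\geq\rho_{\min}$ pointwise; the second is precisely the Cauchy--Schwarz defect $\vv<\rho,\rho>_\Omega\vv<u,u>_\Omega-\vv<\rho,u>_\Omega^2\geq 0$ for the inner product $\vv<\cdot,\cdot>_\Omega$. Since $V_B,V_G,\Deg>0$, inequality \eqref{eq-nonnegative} follows, and Theorem~\ref{thm-comparison} gives the equality of all eigenvalues. I do not expect a genuine obstacle here: the corollary is a short reduction to Theorem~\ref{thm-comparison}, and the only points needing care are using the min-max bound in the correct direction (combining it with \eqref{eq-cor-general} requires $\vv<u,u>_\Omega\geq 0$) and recognizing the final negative term as a Cauchy--Schwarz defect.
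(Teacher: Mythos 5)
Your proposal is correct and follows essentially the same route as the paper's proof: both verify condition (2) of Theorem \ref{thm-comparison} by combining the variational bound $\vv<du,du>_\Omega\geq\mu_2(\Omega)\vv<u,u>_\Omega$, the pointwise estimate $\vv<\rho u,u>_\Omega\geq\rho_{\min}\vv<u,u>_\Omega$, and the Cauchy--Schwarz inequality $\vv<\rho,u>_\Omega^2\leq\vv<\rho,\rho>_\Omega\vv<u,u>_\Omega$. The only difference is cosmetic: the paper chains all three bounds into a single factor multiplying $\vv<u,u>_\Omega$ and then invokes \eqref{eq-cor-general}, whereas you insert \eqref{eq-cor-general} first and check the two leftover terms separately.
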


By using Corollary \ref{cor-general}, one can construct many graphs such that equality of \eqref{eq-comparison} holds for $i=1,2,\cdots, |B|$. For example, first fix a connected weighted graph $(\Omega, m_\Omega, w_\Omega)$. Then add the boundary $B$ to $\Omega$, joining $B$ to $\Omega$ and arranged the measure of the boundary vertices and weight of the boundary edges so that $$w_{xy}=m_xm_y\rho_y$$ holds for any $x\in B$ and $y\in \Omega$. Note that $\mu_2(G|_\Omega,m_\Omega,w_\Omega)>0$ and
$$\mu_2(G|_\Omega,m_\Omega,\lambda w_\Omega)=\lambda\mu_2(G|_\Omega,m_\Omega,w_\Omega).$$
So, if $\lambda$ is large enough, the equality \eqref{eq-cor-general} will hold. Then, by Corollary \ref{cor-general}, the equality of \eqref{eq-comparison} holds for $i=1,2,\cdots,|B|$ on the graph equipped with the re-scaled weight.

To study more explicitly the rigidity of \eqref{eq-comparison}, we first consider the case that $\rho$ is constant.
\begin{cor}\label{cor-constant}Let $(G, m,w,B)$ be a connected weighted finite graph with boundary and suppose that there is a positive constant $\rho$,  such that for each $x\in B$ and $y\in\Omega$, $w_{xy}=\rho m_xm_y$. Then $\sigma_i=\mu_i$ for all $i=1,2,\cdots, |B|$ if and only if
\begin{equation}
\rho(V_\Omega-V_B)\leq \mu_2(\Omega).
\end{equation}
 In particular, if the induced graph on $\Omega$ is disconnected, then $\sigma_i=\mu_i$ for all $i=1,2,\cdots, |B|$ if and only if
\begin{equation}
V_B\geq V_\Omega.
\end{equation}
\end{cor}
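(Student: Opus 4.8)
The plan is to derive this as a specialization of Theorem~\ref{thm-comparison}: with $\rho$ a positive constant, I will show that conditions (1) and (2) of that theorem collapse to the single scalar inequality $\rho(V_\Omega-V_B)\leq\mu_2(\Omega)$. Condition (1) is immediate, since the hypothesis $w_{xy}=\rho m_xm_y$ is precisely \eqref{eq-w-split} for the nonnegative constant function $\rho_y\equiv\rho$. Hence ``$\sigma_i=\mu_i$ for all $i=1,\dots,|B|$'' is equivalent to condition (2) by itself.

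First I would feed the constant $\rho$ into the quadratic expression \eqref{eq-nonnegative}. One finds $\Deg=\vv<\rho,1>_\Omega=\rho V_\Omega$ and $\vv<\rho u,u>_\Omega=\rho\vv<u,u>_\Omega$, while the cross term vanishes: $\vv<\rho,u>_\Omega=\rho\vv<u,1>_\Omega=0$ for every $u$ satisfying the side constraint $\vv<u,1>_\Omega=0$. Substituting, condition (2) becomes the requirement that
\[
\vv<du,du>_\Omega\geq \rho(V_\Omega-V_B)\,\vv<u,u>_\Omega
\]
for all $u\in\R^\Omega$ with $\vv<u,1>_\Omega=0$.

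Next I would invoke the Rayleigh quotient characterization of the second Laplacian eigenvalue of the induced graph $G|_\Omega$,
\[
\mu_2(\Omega)=\min\Big\{\tfrac{\vv<du,du>_\Omega}{\vv<u,u>_\Omega}\ :\ u\in\R^\Omega\setminus\{0\},\ \vv<u,1>_\Omega=0\Big\},
\]
noting that $\vv<du,du>_\Omega=\vv<du,du>_{E(\Omega,\Omega)}$ is exactly the Dirichlet energy on $G|_\Omega$. The displayed inequality holds for all admissible $u$ if and only if this minimum is at least $\rho(V_\Omega-V_B)$, that is, if and only if $\mu_2(\Omega)\geq\rho(V_\Omega-V_B)$; this gives the first equivalence. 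For the ``in particular'' clause, a disconnected $G|_\Omega$ admits a nonzero $u$ that is locally constant on components with $\vv<u,1>_\Omega=0$ and $du\equiv 0$ on $E(\Omega,\Omega)$, whence $\mu_2(\Omega)=0$; the criterion then reads $\rho(V_\Omega-V_B)\leq 0$, equivalent to $V_B\geq V_\Omega$ since $\rho>0$.

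There is little genuine difficulty here beyond the bookkeeping: the one step worth flagging is the vanishing of the cross term $\vv<\rho,u>_\Omega$, which is exactly what makes the constant-$\rho$ case reduce to a clean eigenvalue comparison on $\Omega$. Once that is observed, the result is a direct translation of condition (2) through the min-max principle.
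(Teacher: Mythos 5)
Your proposal is correct and follows essentially the same route as the paper: substitute the constant $\rho$ into condition (2) of Theorem~\ref{thm-comparison}, note that $\Deg=\rho V_\Omega$ and that the cross term $\vv<\rho,u>_\Omega$ vanishes under the constraint $\vv<u,1>_\Omega=0$, and then identify the resulting inequality with the Rayleigh-quotient characterization of $\mu_2(\Omega)$. Your explicit treatment of the disconnected case (via $\mu_2(\Omega)=0$) is a small addition the paper leaves implicit, but the argument is the same.
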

Next, we consider the rigidity of \eqref{eq-comparison} for graphs with unit weight.
\begin{cor}\label{cor-unit}
Let $(G,B)$ be a connected finite graph with boundary equipped with the unit weight. Then, the equality of \eqref{eq-comparison} holds for all $i=1,2,\cdots,|B|$ if and only if all the following statements are true:
\begin{enumerate}
\item Each interior vertex is either adjacent to all boundary vertices or adjacent to no boundary vertex;
\item Let $\Omega_0$ be the set of interior vertices that are not adjacent to any boundary vertex and $\Omega_1=\Omega\setminus \Omega_0$, then $E(\Omega_0,\Omega_1)=|\Omega_0||\Omega_1|$ which means for any $x\in\Omega_0$ and $y\in\Omega_1$, $x$ and $y$ are adjacent;
\item $\mu_2(\Omega_1)\geq|\Omega_1|-|\Omega_0|-|B|$.
\end{enumerate}
\end{cor}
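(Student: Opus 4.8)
The plan is to derive the corollary entirely from the rigidity criterion in Theorem \ref{thm-comparison}, specialized to the unit weight $m\equiv 1$, $w\equiv 1$; I will only need to unwind conditions (1) and (2) of that theorem into the three combinatorial statements.

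First I would dispose of condition (1). For the unit weight the identity $w_{xy}=\rho_y m_x m_y$ reads $w_{xy}=\rho_y$ for all $x\in B$, $y\in\Omega$; since $w_{xy}\in\{0,1\}$ and $\rho_y$ cannot depend on $x$, each $\rho_y$ equals $0$ or $1$. This is precisely the dichotomy in statement (1), and it produces the partition $\Omega=\Omega_0\sqcup\Omega_1$ with $\Omega_1=\{\rho=1\}$, $\Omega_0=\{\rho=0\}$, together with the constants $\Deg=\langle\rho,1\rangle_\Omega=|\Omega_1|$, $V_B=|B|$, $V_G=|B|+|\Omega|$. Writing $a=|\Omega_0|$, $b=|\Omega_1|$, $c=|B|$ and $\tau=\sum_{\Omega_1}u$, and using $\langle\rho u,u\rangle_\Omega=\langle u,u\rangle_{\Omega_1}$ and $\langle\rho,u\rangle_\Omega=\tau$, substituting into \eqref{eq-nonnegative} shows that condition (2) of the theorem is the nonnegativity on $\{\langle u,1\rangle_\Omega=0\}$ of
\[
Q(u):=\langle du,du\rangle_\Omega-b\langle u,u\rangle_\Omega+c\langle u,u\rangle_{\Omega_1}-\tfrac{a+b+c}{b}\tau^2 .
\]

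The step I expect to be the main obstacle is extracting the complete bipartite condition (2) from $Q\geq 0$: concentrated delta-type test functions fail, since they carry uncontrolled interior Dirichlet energy. The resolution is to test $Q$ only on the block-constant functions $u=\alpha\1_{\Omega_0}+\beta\1_{\Omega_1}$ satisfying the mean-zero constraint $a\alpha+b\beta=0$. For these all Dirichlet energy within $\Omega_0$ and within $\Omega_1$ vanishes, and a short computation using $a\alpha+b\beta=0$ collapses $Q$ to the single term
\[
Q(u)=-\big(|\Omega_0||\Omega_1|-E(\Omega_0,\Omega_1)\big)(\alpha-\beta)^2 .
\]
Choosing $\alpha\neq\beta$ (for instance $\alpha=b$, $\beta=-a$) forces $E(\Omega_0,\Omega_1)=|\Omega_0||\Omega_1|$, which is statement (2); conversely, when (2) holds these functions contribute nothing.

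Finally, assuming the complete bipartite structure I would reduce $Q\geq 0$ to statement (3). Expanding $\sum_{x\in\Omega_0,\,y\in\Omega_1}(u(x)-u(y))^2$ under $\sum_{\Omega_0}u=-\tau$ and simplifying turns $Q$ into
\[
\langle du,du\rangle_{E(\Omega_0,\Omega_0)}+\langle du,du\rangle_{E(\Omega_1,\Omega_1)}+(a-b+c)\langle u,u\rangle_{\Omega_1}+\tfrac{b-a-c}{b}\tau^2 .
\]
Since $u|_{\Omega_0}$ enters only through the nonnegative term $\langle du,du\rangle_{E(\Omega_0,\Omega_0)}$ subject to $\sum_{\Omega_0}u=-\tau$, its infimum for fixed $\tau$ is attained at the constant $u|_{\Omega_0}\equiv-\tau/a$, where that energy is $0$; then splitting $u|_{\Omega_1}$ into its mean $\tfrac{\tau}{b}\1_{\Omega_1}$ plus a mean-zero fluctuation $u'$, the $\tau^2$ contributions cancel identically. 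Hence $Q\geq 0$ on mean-zero functions is equivalent to $\langle du',du'\rangle_{E(\Omega_1,\Omega_1)}\geq(b-a-c)\langle u',u'\rangle_{\Omega_1}$ for every mean-zero $u'\in\R^{\Omega_1}$, which by the variational characterization of $\mu_2(\Omega_1)$ is exactly $\mu_2(\Omega_1)\geq|\Omega_1|-|\Omega_0|-|B|$, i.e. statement (3). Assembling the three steps with Theorem \ref{thm-comparison} yields the asserted equivalence; I would also check the degenerate cases $\Omega_0=\emptyset$ and $|\Omega_1|\le 1$, where the conditions reduce to vacuous or trivially satisfied statements.
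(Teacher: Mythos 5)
Your proposal is correct and follows essentially the same route as the paper: specialize the rigidity conditions of Theorem \ref{thm-comparison} to the unit weight, force $E(\Omega_0,\Omega_1)=|\Omega_0||\Omega_1|$ by testing on block-constant mean-zero functions (the paper's choice $\bar u_0=-|\Omega_1|$, $\bar u_1=|\Omega_0|$ is your $\alpha,\beta$ up to sign, with simplicity supplying the reverse inequality), and then reduce the quadratic form to the variational characterization of $\mu_2(\Omega_1)$. The only difference is organizational—you test first and decompose afterwards, which avoids the paper's large intermediate expansion \eqref{eq-nonnegative-2}, but the underlying computation is the same.
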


Moreover, by \eqref{eq-comparison}, a lower bound on $\mu_i$ will automatically give us a lower bound for $\sigma_i$.  For example, by the Lichnerowicz estimate for $\mu_2$ in \cite{BC,KKR} with respect to Bakry-\'Emery curvature, one has the following Lichnerowicz estimate for $\sigma_2$  directly.
\begin{cor}
Let $(G,m,w,B)$ be a connected weighted finite graph with boundary. Suppose that $(G,m,w)$ satisfy the Bakry-\'Emery curvature-dimension $\CD(K,n)$ with $K>0$ and $n>1$. Then,
\begin{equation}\label{eq-Lich-Steklov}
\sigma_2\geq \frac{nK}{n-1}.
\end{equation}
\end{cor}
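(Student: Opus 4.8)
The plan is to obtain \eqref{eq-Lich-Steklov} as an immediate consequence of the eigenvalue comparison in Theorem \ref{thm-comparison} together with the known Lichnerowicz-type lower bound for the first nonzero Laplacian eigenvalue on graphs under a positive Bakry-\'Emery curvature-dimension condition. The strategy is therefore a two-step chaining of inequalities in which neither step requires new computation: the first step is supplied by Theorem \ref{thm-comparison} and the second by the results of \cite{BC,KKR}. First I would invoke Theorem \ref{thm-comparison} with $i=2$ to obtain
\begin{equation}
\sigma_2\geq \mu_2,
\end{equation}
which is valid for any connected weighted finite graph with boundary and, in particular, makes no use of the curvature hypothesis.

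Next I would recall the graph Lichnerowicz estimate. Since $(G,m,w)$ satisfies $\CD(K,n)$ with $K>0$ and $n>1$, the carr\'e du champ calculus yields, for the first nonzero eigenvalue of $-\Delta$, the bound
\begin{equation}
\mu_2\geq \frac{nK}{n-1},
\end{equation}
exactly as proved in \cite{BC,KKR}. Here it is essential that $\mu_2$ denotes the second eigenvalue of $-\Delta$ on the full weighted graph $(G,m,w)$, which is precisely the quantity appearing in Theorem \ref{thm-comparison}; thus the curvature-dimension hypothesis is imposed on the same object to which the comparison applies. Combining the two displayed inequalities gives $\sigma_2\geq \mu_2\geq \frac{nK}{n-1}$, which is \eqref{eq-Lich-Steklov}.

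The proof presents essentially no obstacle of its own, since all the analytic content is already packaged into Theorem \ref{thm-comparison} on the one hand and into the cited Lichnerowicz estimate for $\mu_2$ on the other. If anything, the only point requiring care is the bookkeeping of indices: one must confirm that the second Steklov eigenvalue and the second Laplacian eigenvalue are indexed consistently, both starting from the trivial eigenvalue $0$ at $i=1$, so that the comparison $\sigma_2\geq\mu_2$ lines up correctly with the Lichnerowicz bound on the genuinely first nonzero Laplacian eigenvalue. This is guaranteed by the normalizations $0=\sigma_1<\sigma_2$ and $0=\mu_1<\mu_2$ fixed earlier, so the chaining is unambiguous.
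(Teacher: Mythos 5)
Your proposal is correct and is exactly the paper's argument: the authors obtain this corollary by chaining the comparison $\sigma_2\geq\mu_2$ from Theorem \ref{thm-comparison} with the Lichnerowicz estimate $\mu_2\geq \frac{nK}{n-1}$ of \cite{BC,KKR}, just as you do. Your remark on index consistency (both sequences starting at the trivial eigenvalue $0$ for $i=1$) is also the right point of care, and nothing further is needed.
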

In \cite{SY2}, we give a direct proof to \eqref{eq-Lich-Steklov} and discuss its rigidity.

Similarly, by the Licherowicz estimate for $\mu_2$ with respect to Ollivier curvature \cite{LLY}, one can have a Licherowicz estimate for $
\sigma_2$ with respect to Ollivier curvature. Here, the Ollivier curvature we used is the most general one given by M\"unch and Wojciechowski \cite{MW} recently.  Their definition is a natural extension of the definition by Lin-Lu-Yau \cite{LLY} on general weighted graphs. In \cite{MW}, M\"unch and Wojciechowski showed that the Ollivier curvature $\kappa(x,y)$ they defined  can be computed by the following formula:
\begin{equation}\label{eq-def-kappa}
\kappa(x,y)=\inf_{f\in \Lip(1),\nabla_{yx}f=1}\nabla_{xy}\Delta f
\end{equation}
for any two distinct vertices $x,y$, where
\begin{equation}
\nabla_{xy}f:=\frac{f(x)-f(y)}{d(x,y)}.
\end{equation}
By substituting an eigenfunction $f$ of $\mu_2$ into \eqref{eq-def-kappa}, one obtain the Lichnerowicz estimate:
 \begin{equation}
 \mu_2\geq \kappa
 \end{equation}
 for $\mu_2$ directly when the Ollivier curvature of $(G,m,w)$ has a positive lower bound $\kappa$. Combining this with \eqref{eq-comparison}, one has the following Lichnerowicz estimate for $\sigma_2$.
 \begin{cor}
Let $(G,m,w,B)$ be a connected weighted finite graph with boundary. Suppose that the Ollivier curvature of $(G,m,w)$ is not less than a positive constant $\kappa$. Then,
\begin{equation}\label{eq-Lich-Ollivier}
\sigma_2\geq \kappa.
\end{equation}
\end{cor}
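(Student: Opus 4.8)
The plan is to combine two ingredients: the eigenvalue comparison $\sigma_2\ge\mu_2$ supplied by Theorem \ref{thm-comparison}, and a Lichnerowicz-type estimate $\mu_2\ge\kappa$ for the Laplacian of $(G,m,w)$ coming from the curvature formula \eqref{eq-def-kappa}. Since \eqref{eq-comparison} already gives $\sigma_2\ge\mu_2$ (as $|B|\ge 2$), the whole task reduces to establishing $\mu_2\ge\kappa$, after which \eqref{eq-Lich-Ollivier} follows by concatenation $\sigma_2\ge\mu_2\ge\kappa$.

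For the Laplacian estimate I would start from a nonconstant eigenfunction $u$ with $-\Delta u=\mu_2 u$ and feed a suitable rescaling of it into the infimum defining $\kappa$. Set $M:=\max_{x\ne y}\nabla_{yx}u$; since $u$ is nonconstant and $\nabla$ is antisymmetric, $M>0$. The first point to check is that this maximum is attained on an adjacent pair: for vertices at distance $k\ge 2$ one telescopes $\nabla_{yx}u$ along a geodesic $x=z_0,\dots,z_k=y$ and bounds it by the maximal increment across a single edge, so the maximum over all pairs equals the maximum over adjacent pairs. Fix an adjacent pair $(x_0,y_0)$ realizing $M$ and put $\tilde u:=u/M$. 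Antisymmetry again gives $-1\le\nabla_{yx}\tilde u\le 1$ for every pair, i.e. $\tilde u\in\Lip(1)$, while by construction $\nabla_{y_0x_0}\tilde u=1$. Thus $\tilde u$ is an admissible competitor in \eqref{eq-def-kappa} for the edge $\{x_0,y_0\}$, so
\[
\kappa\le\kappa(x_0,y_0)\le\nabla_{x_0y_0}\Delta\tilde u.
\]
Finally $\Delta\tilde u=-\mu_2\tilde u$ turns the right-hand side into $\mu_2\nabla_{y_0x_0}\tilde u=\mu_2$, yielding $\mu_2\ge\kappa$.

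The routine part is the sign and scaling bookkeeping in the last line; the only genuinely delicate step is the normalization, namely verifying simultaneously that $\tilde u$ lies in $\Lip(1)$ and that the extremal pair can be taken adjacent, since \eqref{eq-def-kappa} is phrased for a pair of vertices and the competitor must be $1$-Lipschitz. Both facts follow from the antisymmetry of $\nabla$ together with the geodesic telescoping argument, so I expect no real obstruction here; the substance of the corollary is entirely carried by the comparison in Theorem \ref{thm-comparison}, with the Ollivier-curvature Lichnerowicz bound $\mu_2\ge\kappa$ entering only as the input lower bound for $\mu_2$.
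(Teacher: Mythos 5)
Your proof is correct and takes essentially the same route as the paper: the paper also obtains \eqref{eq-Lich-Ollivier} by combining the comparison $\sigma_2\geq\mu_2$ from Theorem \ref{thm-comparison} with the Lichnerowicz bound $\mu_2\geq\kappa$, the latter obtained by substituting an eigenfunction of $\mu_2$ into \eqref{eq-def-kappa}. The paper leaves the substitution step as a one-line remark; your normalization of the eigenfunction, the telescoping argument reducing to an adjacent extremal pair, and the sign bookkeeping are precisely the details it leaves implicit.
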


Furthermore, by using the lower bound of $\mu_2$ by Fiedler \cite{FI}, we have the following lower bounds for $\sigma_2$.
\begin{thm}
Let $G$ be a connected finite graph equipped with the unit weight. Then,
\begin{equation}
\sigma_2\geq 2e(G)\left(1-\cos\frac\pi{|V(G)|}\right)\geq 2v(G)\left(1-\cos\frac\pi{|V(G)|}\right)
\end{equation}
where $e(G)$ is edge connectivity of $G$ and $v(G)$ is the vertex connectivity of $G$. That is, the least number of edges and least number of vertices in $G$ required to be deleted to make $G$ becoming disconnected respectively.
\end{thm}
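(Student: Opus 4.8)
The plan is to combine the eigenvalue comparison of Theorem \ref{thm-comparison} with the classical lower bound for the algebraic connectivity of a graph due to Fiedler, and then to weaken the resulting estimate using a standard combinatorial inequality between edge and vertex connectivity.

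First I would apply the comparison \eqref{eq-comparison} with $i=2$ to $(G,B)$ equipped with the unit weight, which immediately yields $\sigma_2\geq\mu_2$. The only point to check here is a normalization: under the unit weight $m\equiv 1$, $w\equiv 1$, the formula following \eqref{eq-integration-by-part} shows that $-\Delta$ is exactly the standard combinatorial graph Laplacian $L=D-A$, so the paper's $\mu_2$ coincides with the algebraic connectivity $a(G)$ in Fiedler's sense. This identification is what allows Fiedler's estimate to be invoked verbatim.

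Next I would invoke Fiedler's estimate \cite{FI}: for a connected graph $G$ on $n=|V(G)|$ vertices with edge connectivity $e(G)$,
\begin{equation}
\mu_2=a(G)\geq 2e(G)\left(1-\cos\frac{\pi}{n}\right).
\end{equation}
Chaining this with $\sigma_2\geq\mu_2$ gives the first inequality of the theorem. For the second inequality I would use Whitney's inequality, namely that the vertex connectivity never exceeds the edge connectivity, $v(G)\leq e(G)$. Since $1-\cos\frac{\pi}{n}\geq 0$, multiplying $e(G)\geq v(G)$ by the nonnegative factor $2\left(1-\cos\frac{\pi}{n}\right)$ gives the desired weaker bound in terms of $v(G)$.

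I do not expect a genuine analytic obstacle in this argument, since it is essentially an application of the comparison theorem together with two quoted results. The only steps requiring care are the normalization check that the paper's $\mu_2$ for the unit weight is precisely the quantity to which Fiedler's bound applies, and citing Fiedler's estimate and Whitney's inequality in exactly the forms needed so that the two inequalities in the statement line up correctly.
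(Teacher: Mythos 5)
Your proposal is correct and is exactly the argument the paper intends: the theorem is stated right after the remark that it follows ``by using the lower bound of $\mu_2$ by Fiedler,'' i.e.\ chaining $\sigma_2\geq\mu_2$ from \eqref{eq-comparison} with Fiedler's bound $\mu_2\geq 2e(G)\left(1-\cos\frac{\pi}{|V(G)|}\right)$ and Whitney's inequality $v(G)\leq e(G)$. Your normalization check that the paper's $\mu_2$ under the unit weight is Fiedler's algebraic connectivity is the right (and only) point of care, so there is nothing to add.
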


Finally, by using the lower bounds of
$\mu_i$'s obtained by Friedman in \cite{FR}, one has the following lower bounds for Steklov eigenvalues. For the definition of a star, see \cite[P.1]{FR}.
\begin{thm}\label{thm-F}
Let $(G,B)$ be a connected finite graph with boundary equipped with the unit weight and $i\geq 2$. Then,
\begin{enumerate}
\item when $i\not\big |\ |V(G)|$,
\begin{equation}\label{eq-not-divide}
\sigma_i\geq 2-2\cos\frac{\pi}{2k+1}
\end{equation}
where $k=\left\lfloor \frac{|V(G)|}{i}\right\rfloor$. When $|V(G)|\equiv 1(\Mod\ i)$, the equality of \eqref{eq-not-divide} holds if and only if $k=1$, i.e. $|V(G)|=i+1$, and $(G, B)$ is a star of degree $i$ with each arm of length $1$, and with the end points of each arm the boundary vertices.
\item when $i\ \big|\ |V(G)|$,
\begin{equation}\label{eq-divide}
\sigma_i>\mathcal P(k,\lambda_i).
\end{equation}
 Here $k=\frac{|V(G)|}{i}$, $\lambda_i$ is the largest eigenvalue of the path $P_i$ on $i$ vertices equipped with the unit weight, and $\mathcal P(k,\lambda)$ is the first Dirichlet eigenvalue of $(P_{k+1}, m, w, B)$ where $P_{k+1}$ is a path with vertices $0,1,2,\cdots, k$ and $B=\{0\}$, and moreover $m_j=1$ for $j=0,1,\cdots, k$, $w_{12}=w_{23}=\cdots=w_{k-1,k}=1$ and $w_{01}=\lambda$.
\end{enumerate}
\end{thm}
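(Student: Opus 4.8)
The plan is to use the comparison $\sigma_i\geq\mu_i$ of Theorem \ref{thm-comparison} as the backbone and then feed in Friedman's lower bounds for the Laplacian eigenvalues $\mu_i$ of the full graph $G$ (with unit weight) from \cite{FR}. Friedman's estimates supply precisely the two right-hand sides appearing here: $\mu_i\geq 2-2\cos\frac{\pi}{2k+1}$ with $k=\lfloor|V(G)|/i\rfloor$ when $i\nmid|V(G)|$, and a strict bound $\mu_i>\mathcal P(k,\lambda_i)$ with $k=|V(G)|/i$ when $i\mid|V(G)|$. Chaining $\sigma_i\geq\mu_i$ with these gives \eqref{eq-not-divide} and \eqref{eq-divide} immediately, so the only substantive work is the equality analysis in (1) and confirming the strictness in (2). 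Notably, the rigidity in (1) will be handled using only Friedman's inequality together with the rigidity clause of Theorem \ref{thm-comparison}, not his own equality characterization.

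For the rigidity in (1) I would assume $|V(G)|\equiv 1\ (\Mod\ i)$, so that $|V(G)|=ik+1$, and suppose equality holds in \eqref{eq-not-divide}; then both steps of $\sigma_i\geq\mu_i\geq 2-2\cos\frac{\pi}{2k+1}$ are equalities. From $\sigma_i=\mu_i$ the rigidity clause of Theorem \ref{thm-comparison} furnishes an eigenfunction $v_i$ of $\mu_i$ with $v_i|_\Omega=0$ and $\Delta v_i|_\Omega=0$, so $v_i$ is supported on $B$. Since $B$ is independent, every neighbour of a support vertex $x\in B$ lies in $\Omega$ where $v_i$ vanishes, so $-\Delta v_i=\mu_i v_i$ at $x$ collapses to $\deg(x)\,v_i(x)=\mu_i v_i(x)$, forcing $\deg(x)=\mu_i$. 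As $\mu_i=2-2\cos\frac{\pi}{2k+1}\in(0,2)$ is now a positive integer it must equal $1$, giving $\cos\frac{\pi}{2k+1}=\frac12$, i.e. $k=1$ and $|V(G)|=i+1$.

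With $k=1$ a vertex count finishes the structure: $\Omega\neq\varnothing$ and $|B|\geq i$ (so that $\sigma_i$ is defined), together with $|B|+|\Omega|=i+1$, force $|\Omega|=1$ and $|B|=i$. The single interior vertex $c$ is then the only admissible neighbour of each boundary vertex, so every element of $B$ is a degree-one leaf attached to $c$; this is exactly the star of degree $i$ with arms of length one and boundary leaves. The converse is a direct computation: the harmonic extension of $f\in\R^B$ sends $c$ to the mean $\bar f$, whence $\Lambda(f)=f-\bar f\,\1$ and $\sigma_2=\cdots=\sigma_i=1=2-2\cos\frac{\pi}{3}$, so equality is attained.

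I expect the strict inequality in (2) to be the main obstacle. The clean route is to take Friedman's bound to be strict for connected graphs, $\mu_i>\mathcal P(k,\lambda_i)$, so that $\sigma_i\geq\mu_i>\mathcal P(k,\lambda_i)$ with no further input; then the entire difficulty lies in establishing that no connected graph on $ik$ vertices meets the path bound. If instead the bound is attained by some connected $G_0$, I would upgrade to strictness through the Steklov side: $\sigma_i=\mathcal P(k,\lambda_i)$ would force $\sigma_i=\mu_i$ and, by the boundary evaluation above, $\deg(x)=\mathcal P(k,\lambda_i)$ for some $x\in B$, which one rules out by checking that the $\mu_i$-eigenspace of $G_0$ has no representative supported on an independent set $B$ with $\Delta v_i|_\Omega=0$, yielding $\sigma_i>\mu_i=\mathcal P(k,\lambda_i)$. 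Deciding which mechanism is in force, and carrying out the corresponding verification, is the crux of part (2).
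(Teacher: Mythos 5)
Your part (1) is correct and takes a genuinely different route from the paper's. The paper invokes the rigidity in \cite[Theorem 1.2]{FR} to conclude that the equality graph is a star of degree $i$ with arms of length $k$, and then inspects the $\mu_i$-eigenspace of that star (spanned by functions supported on pairs of arms) to see that no eigenfunction can vanish on $\Omega$ unless $k=1$. You bypass Friedman's rigidity entirely: from $\sigma_i=\mu_i$, the rigidity clause of Theorem \ref{thm-comparison} gives a $\mu_i$-eigenfunction vanishing on $\Omega$; evaluating the eigenvalue equation at a boundary vertex in its support (legitimate, since $B$ is independent and the weight is the unit weight) gives $\mu_i=\deg(x)$, a positive integer, while $2-2\cos\frac{\pi}{2k+1}\leq 1$ forces $\mu_i=1$ and hence $k=1$; the count $|B|\geq i$, $|\Omega|\geq 1$, $|B|+|\Omega|=i+1$ then pins down the star, and your converse computation $\Lambda(f)=f-\bar f\,\mathbf{1}$, $\sigma_2=\cdots=\sigma_i=1$ is right. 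This is more elementary than the paper's argument and is self-contained on the rigidity side.

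Part (2) is where the genuine gap lies, and it is twofold. First, your preferred ``clean route'' rests on a false premise: Friedman's bound \cite[Theorem 1.4]{FR} is $\mu_i\geq\mathcal P(k,\lambda_i)$, \emph{not} strict, and equality is attained --- indeed Friedman's rigidity statement is precisely that equality holds for combs of degree $i$ with teeth of length $k-1$ (and, per the paper's remark, also for paths and cycles when $k=1$ and $|V(G)|$ is odd). So the strictness of \eqref{eq-divide} is a purely Steklov phenomenon and cannot come from the Laplacian estimate alone. Second, your fallback is the correct mechanism --- it is exactly what the paper does --- but you explicitly leave it uncarried out (``the crux''). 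The paper completes it as follows: equality in \eqref{eq-divide} forces $\sigma_i=\mu_i=\mathcal P(k,\lambda_i)$; Friedman's rigidity identifies $G$ as a comb (or a path/cycle in the degenerate case); the $\mu_i$-eigenspace of the comb is one-dimensional, spanned by $f(v_{rs})=g(s)h(r)$ with $g$ the positive first Dirichlet eigenfunction of the weighted path and $h$ the nowhere-vanishing top eigenfunction of the base, so every $\mu_i$-eigenfunction is nowhere zero and cannot vanish on $\Omega\neq\emptyset$, contradicting the rigidity clause of Theorem \ref{thm-comparison}; paths and cycles are handled identically. Incidentally, your own degree-integrality trick from part (1) would close the gap without any eigenspace analysis: equality would give $\mu_i=\deg(x)\in\{1,2,\dots\}$, whereas testing the Rayleigh quotient defining $\mathcal P(k,\lambda)$ with a function vanishing at vertex $1$ shows $\mathcal P(k,\lambda)<2-2\cos\frac{\pi}{2k-1}\leq 1$ for all $k\geq 2$ (strictly, since the first Dirichlet eigenfunction is positive at vertex $1$), and the case $k=1$ is vacuous because then $i=|V(G)|>|B|$. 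Either way, the verification you deferred is exactly the missing content of part (2).
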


When $|V(G)|\equiv s\ (\Mod\ i)$ with $2\leq s\leq i-1$, we lack of  rigidity for \eqref{eq-not-divide} because we lack of rigidity  for the corresponding estimate for $\mu_i$ in \cite{FR}. The same as in \cite{FR}, in this case, the equality of \eqref{eq-not-divide} at least holds for two different graphs. For example, when $|V(G)|=5$, $i=3$, the equality of \eqref{eq-not-divide} holds for a star of degree $4$ with all arms of length $1$, and with  the end points of three arms as the boundary vertices or with all the four end points of the four arms as boundary vertices. It also holds for a star of degree $3$ with two arms of length $1$ and one arm of length $2$ and with the the end points of the two arms of length $1$ and the middle point the arm of length $2$ as the three boundary vertices.

The rest of the paper is organized as follows. 

\section{Comparison of $\sigma_i$ and $\mu_i$}
Although \eqref{eq-comparison} comes from an almost obvious observation via Courant's min-max principle as mentioned in the last section, we will present a detailed proof below for convenience of handling the rigidity of \eqref{eq-comparison}.
\begin{proof}[Proof of Theorem \ref{thm-comparison}] Let $f_1=1, f_2,\cdots, f_{|B|}\in \R^B$ be the eigenfunctions of $\sigma_1=0,\sigma_2,\cdots,\sigma_{|B|}$ respectively, such that
\begin{equation}
  \vv<f_i,f_j>_B=0
\end{equation}
when $i\neq j$. Moreover, let $u_1=1,u_2,\cdots,u_{|V|}\in \R^V$ be the eigenfunctions of $\mu_1=0,\mu_2,\cdots,\mu_{|V|}$ respectively, such that
\begin{equation}
\vv<u_i,u_j>=0
\end{equation}
when $i\neq j$. For $i=2,3,\cdots, |B|$, let
$$v_i=c_1u_{f_1}+c_2u_{f_2}+\cdots c_i u_{f_i}$$
with $c_1,c_2,\cdots,c_i$ not all zero, be such that
\begin{equation}\label{eq-perp}
\vv<v_i,u_j>=0\ {\rm for}\ j=1,2,\cdots, i-1.
\end{equation}
This can be done because \eqref{eq-perp} is a homogeneous linear system with $i-1$ equations and $i$ unknowns $c_1,c_2,\cdots, c_{i}$ which  certainly has nonzero solutions. Then
\begin{equation}\label{eq-min-max}
\begin{split}
\mu_i\leq \frac{\vv<dv_i,dv_i>}{\vv<v_i,v_i>}\leq \frac{\vv<dv_i,dv_i>}{\vv<v_i,v_i>_B}\leq\sigma_i.
\end{split}
\end{equation}
It is clear that the equality $\sigma_i=\mu_i$ holds only when $v_i|_\Omega=\Delta v_i|_\Omega=0$ and $v_i$ is simultaneously an eigenfunction of $\mu_i$ and $\sigma_i$.

We next come to the rigidity part. If the equalities of \eqref{eq-comparison} holds for $i=1,2,\cdots, |B|$, we first claim that there is a sequence $\tilde v_1=1,\tilde v_2,\cdots,\tilde v_{|B|}$ of nonzero functions on $V$ such that
\begin{enumerate}
\item[(i)] $\tilde v_i|_{\Omega}=\Delta\tilde v_i|_{\Omega}=0$ for $i=2,\cdots,|B|$;
\item[(ii)] $\frac{\p \tilde v_i}{\p n}=\sigma_i \tilde v_i$ for $i=1,2,\cdots,|B|$;
\item[(iii)] $\Delta \tilde v_i=\mu_i \tilde v_i$ for $i=1,2,\cdots, |B|$;
\item[(iv)] $\vv<\tilde v_i,\tilde v_j>_B=\vv<\tilde v_i,\tilde v_j>=0$ when $1\leq j<i\leq |B|$.
\end{enumerate}
We now construct the sequence $\tilde v_1,\tilde v_2,\cdots,\tilde v_{|B|}$ by induction. For $i\geq 2$, suppose $\tilde v_1,\tilde v_2,\cdots,\tilde v_{i-1}$ satisfying (i),(ii),(iii) and (iv) has been constructed.
Let
$$\tilde v_i=c_1u_{f_1}+c_2u_{f_2}+\cdots+c_{i-1}u_{f_{i-1}}+c_i u_{f_i}$$
with $c_1,c_2,\cdots,c_i$ not all zero, be such that
\begin{equation}
\vv<\tilde v_i, \tilde v_j>=0
\end{equation}
for all $j=1,2,\cdots, i-1$. This can be done because of the same reason as before. Then, by  replacing the function $v_i$ by $\tilde v_i$ in \eqref{eq-min-max}, and noting that $\mu_i=\sigma_i$, we know that $\tilde v_i$ must satisfy (i),(ii),(iii) and (iv).

Note that for any $f\in \R^B$ with
\begin{equation}\label{eq-perp-f}
\sum_{x\in B}f(x)m_x=\vv<f,1>_B=0,
\end{equation}
 one has
\begin{equation}
f=\sum_{i=2}^{|B|}c_i\tilde v_i|_B
\end{equation}
for some $c_2,\cdots,c_{|B|}$. Then
\begin{equation}
u_f=\sum_{i=2}^{|B|}c_i\tilde v_i
\end{equation}
because $\Delta \tilde v_i|_\Omega=0$. Moreover, by that $\tilde v_i|_\Omega=0 $ for $i=2,3,\cdots,|B|$, $u_f(y)=0$ for all $y\in\Omega$. So, for any $y\in \Omega$, we have
\begin{equation}\label{eq-perp-f-2}
\begin{split}
0=m_y\Delta u_f(y)=\sum_{x\in B}f(x){w_{xy}}.
\end{split}
\end{equation}
Comparing this to \eqref{eq-perp-f}, we know that $w_{xy}=\kappa_y\cdot m_x$ for any $x\in B$ and $y\in \Omega$ for some nonnegative function $\kappa\in \R^\Omega$. Let $\rho_y=\frac{\kappa_y}{m_y}$ for $y\in\Omega$. Then, we get (1) of the theorem.

Conversely, when $w_{xy}=\rho_ym_xm_y$ for any $x\in B$ and $y\in\Omega$, for any nonzero $f\in \R^B$ with $\vv<f,1>_B=0$, one has $u_f(y)=0$ for any $y\in\Omega$. So,
\begin{equation}
 \Lambda f(x)=\frac{\p u_f}{\p n}(x)=\frac1{m_x}\sum_{y\in\Omega}f(x)w_{xy}=\Deg\cdot f(x)
\end{equation}
for any $x\in B$, and
\begin{equation}
-\Delta u_f=\Deg\cdot u_f
\end{equation}
This implies that $\sigma_2=\sigma_3=\cdots=\sigma_{|B|}=\Deg$, and equality of \eqref{eq-comparison} holds for any $i=1,2,\cdots, |B|$ if and only if
\begin{equation}\label{eq-v}
\vv<dv,dv>\geq \Deg\cdot \vv<v,v>.
\end{equation}
for any nonzero $v\in \R^V$ with $\vv<v,u_{f_i}>=0$ for $i=1,2,\cdots,|B|$.

Note that $u_{f_i}|_\Omega=0$ for $i=2,3,\cdots, |B|$, so
\begin{equation}
\vv<v,u_{f_i}>_B=\vv<v,u_{f_i}>=0
\end{equation}
for $i=2,3,\cdots,|B|$. This implies that $v|_B$ must be constant.

When $v|_B\equiv0$, let $u=v|_\Omega$. Then $\vv<u,1>_\Omega=\vv<v,1>=0$ and
\begin{equation}
\vv<dv,dv>=\vv<du,du>_\Omega+V_B\vv<\rho u,u>_\Omega.
\end{equation}
So, by \eqref{eq-v}, we have
\begin{equation}\label{eq-u-1}
 \vv<du,du>_\Omega-\Deg\vv<u,u>_\Omega+V_B\vv<\rho u,u>_\Omega\geq 0
\end{equation}
for any $u\in \R^\Omega$ with $\vv<u,1>_\Omega=0$.

When $v|_B$ is nonzero, without loss of generality, we assume that $v|_B\equiv 1$. Let $u\in \R^\Omega$ be such that $u=v|_\Omega+\frac{V_B}{V_\Omega}$. Then
\begin{equation}
\vv<u,1>_\Omega=\vv<v+\frac{V_B}{V_\Omega},1>_\Omega=\vv<v,1>-\vv<v,1>_B+V_B=0.
\end{equation}
Moreover,
\begin{equation}\label{eq-v-1}
\begin{split}
\vv<dv,dv>=&\vv<du,du>_\Omega+V_B\vv<\rho\left(u-\frac{V_G}{V_\Omega}\right),u-\frac{V_G}{V_\Omega}>_\Omega\\
=&\vv<du,du>_\Omega+V_B\vv<\rho u,u>_\Omega-\frac{2V_BV_G}{V_\Omega}\vv<\rho,u>_\Omega+\frac{\Deg V_BV_G^2}{V_\Omega^2}.\\
\end{split}
\end{equation}
On the other hand,
\begin{equation}\label{eq-v-2}
\vv<v,v>=\vv<v,v>_\Omega+\vv<v,v>_B=\vv<u,u>_\Omega+\frac{V_BV_G}{V_\Omega}.
\end{equation}
Substituting \eqref{eq-v-1} and \eqref{eq-v-2} into \eqref{eq-v}, we have
\begin{equation}\label{eq-u-2}
\vv<du,du>_\Omega-\Deg\vv<u,u>_\Omega+V_B\vv<\rho u,u>_\Omega-\frac{2V_BV_G}{V_\Omega}\vv<\rho,u>_\Omega+\frac{\Deg V_B^2 V_G}{V_\Omega^2}\geq0
\end{equation}
for any $u\in \R^\Omega$ with $\vv<u,1>_\Omega=0$. Note that for any constant $\lambda$, $\vv<\lambda u,1>_\Omega=0$, replacing $u$ by  $\lambda u$ in \eqref{eq-u-2}, one has
\begin{equation}
\left(\vv<du,du>_\Omega-\Deg\vv<u,u>_\Omega+V_B\vv<\rho u,u>_\Omega\right)\lambda^2-\frac{2V_BV_G}{V_\Omega}\vv<\rho,u>_\Omega\lambda +\frac{\Deg V_B^2 V_G}{V_\Omega^2}\geq0
\end{equation}
for all $\lambda\in \R$. This is equivalent to
\begin{equation}\label{eq-u-3}
\left(\frac{2V_BV_G}{V_\Omega}\vv<\rho,u>_\Omega \right)^2\leq 4\left(\vv<du,du>_\Omega-\Deg\vv<u,u>_\Omega+V_B\vv<\rho u,u>_\Omega\right)\frac{\Deg V_B^2 V_G}{V_\Omega^2}.
\end{equation}
Conversely, it is not hard to see that \eqref{eq-u-3} implies \eqref{eq-u-2}. Simplifying \eqref{eq-u-3}, we get
\begin{equation}\label{eq-u-4}
\vv<du,du>_\Omega-\Deg\vv<u,u>_\Omega+V_B\vv<\rho u,u>_\Omega-\frac{V_G}{\Deg}\vv<\rho,u>_\Omega^2\geq 0
\end{equation}
for any $u\in\R^\Omega$ with $\vv<u,1>_\Omega=0$. Because \eqref{eq-u-4} is stronger than \eqref{eq-u-1}, we only require \eqref{eq-u-4}.

Conversely, it is not hard to see that \eqref{eq-u-4} implies \eqref{eq-v} because \eqref{eq-u-3} implies \eqref{eq-u-2}. This completes the proof of the theorem.

\end{proof}
We next come to prove Corollary \ref{cor-general}, a sufficient condition for \eqref{eq-comparison} to hold on general weighted graphs.
\begin{proof}[Proof of  Corollary \ref{cor-general}] Note that, for any $u\in \R^\Omega$ with $\vv<u,1>_\Omega=0$,
\begin{equation}
\begin{split}
&\vv<du,du>_\Omega-\Deg\vv<u,u>_\Omega+V_B\vv<\rho u,u>_\Omega-\frac{V_G}{\Deg}\vv<\rho,u>_\Omega^2\\
\geq&\left(\mu_2(\Omega)-\Deg+V_B\rho_{\min}-\frac{V_G}{\Deg}\vv<\rho,\rho>_\Omega\right)\vv<u,u>_\Omega\\
\geq& 0.
\end{split}
\end{equation}
So, by Theorem \ref{thm-comparison}, we get the conclusion.
\end{proof}
We next come to prove Corollary \ref{cor-constant},  the rigidity of \eqref{eq-comparison} for the case that $\rho$ is constant.
\begin{proof}[Proof of Corollary \ref{cor-constant}] Note that, for any $u\in\R^\Omega$ with $\vv<u,1>_\Omega=0$,
\begin{equation}
\begin{split}
&\vv<du,du>_\Omega-\Deg\vv<u,u>_\Omega+V_B\vv<\rho u,u>_\Omega-\frac{V_G}{\Deg}\vv<\rho,u>_\Omega^2\\
=&\vv<du,du>_\Omega-\Deg\vv<u,u>_\Omega+\rho V_B\vv<u,u>_\Omega\\
=&\vv<du,du>_\Omega-\rho(V_\Omega-V_B)\vv<u,u>_\Omega
\end{split}
\end{equation}
since $\Deg=\rho V_\Omega$ in this case. So, \eqref{eq-nonnegative} holds for any $\vv<u,1>_\Omega=0$ if  and only if
\begin{equation}
\mu_2(\Omega)\geq \rho(V_\Omega-V_B).
\end{equation}
This completes the proof of the corollary.
\end{proof}

We next prove Corollary \ref{cor-unit}, the rigidity of \eqref{eq-comparison} for graphs with unit weight.
\begin{proof}[Proof of Corollary \ref{cor-unit}]
 By Theorem \ref{thm-comparison}, it is clear that (i) is true. Moreover, it is clear that $\Omega_1\neq \emptyset$. If $\Omega_0$ is empty, it reduces to case that $\rho\equiv 1$ and by Corollary \ref{cor-constant}, we know that the conclusion is true. So, in the following, we assume that $\Omega_0\neq\emptyset$.

 Because the graph is of unit weight, we know that $\rho_x=0$ when $x\in \Omega_0$ and $\rho_y=1$ when $y\in \Omega_1$. So $\Deg=|\Omega_1|$.

 For each $u\in \R^\Omega$, let $\bar u_0=\vv<u,1>_{\Omega_0}/|\Omega_0|$ and $\bar u_1=\vv<u,1>_{\Omega_1}/|\Omega_1|$, $v_0\in \R^{\Omega_0}$ with $v_0=u|_{\Omega_0}-\bar u_0$ and $v_1\in \R^{\Omega_1}$ with $v_1=u|_{\Omega_1}-\bar u_1$. Then
 \begin{equation}\label{eq-nonnegative-2}
 \begin{split}
 &\vv<du,du>_\Omega-\Deg\vv<u,u>_\Omega+V_B\vv<\rho u,u>_\Omega-\frac{V_G}{\Deg}\vv<\rho,u>_\Omega^2\\
 =&\vv<dv_0,dv_0>_{\Omega_0}-|\Omega_1|\vv<v_0,v_0>_{\Omega_0}+\sum_{x\in\Omega_0}v_0^2(x)\deg_{\Omega_1}(x)+2(\bar u_0-\bar u_1)\sum_{x\in\Omega_0}v_0(x)\deg_{\Omega_1}(x)\\
 &-2\sum_{x\in\Omega_0}\sum_{y\in\Omega_1}v_0(x)v_1(y)w_{xy}+\vv<dv_1,dv_1>_{\Omega_1}-(|\Omega_1|-|B|)\vv<v_1,v_1>_{\Omega_1}\\
 &+\sum_{y\in\Omega_1}v_1^2(y)\deg_{\Omega_0}(y)+2(\bar u_1-\bar u_0)\sum_{y\in\Omega_1}v_1(y)\deg_{\Omega_0}(y)\\
 &+(\bar u_0-\bar u_1)^2|E(\Omega_0,\Omega_1)|-\bar u_1^2(2|\Omega_1|+|\Omega_0|)|\Omega_1|-\bar u_0^2|\Omega_0||\Omega_1|,
 \end{split}
 \end{equation}
 where $$\deg_{\Omega_1}(x)=\sum_{y\in\Omega_1}w_{xy}$$ for any $x\in\Omega_0$ and $$\deg_{\Omega_0}(y)=\sum_{x\in\Omega_0}w_{xy}$$ for any $y\in\Omega_1$. So \eqref{eq-nonnegative} holds for any $u\in \R^\Omega$ with $\vv<u,1>_\Omega=0$ if and only if \eqref{eq-nonnegative-2} is nonnegative for any $v_0\in \R^{\Omega_0}$, $v_1\in \R^{\Omega_1}$, $\bar u_0,\bar u_1\in \R$ with $\vv<v_0,1>_{\Omega_0}=\vv<v_1,1>_{\Omega_1}=0$ and $\bar u_0|\Omega_0|+\bar u_1|\Omega_1|=0$.

 By setting $v_0=v_1=0$ and $\bar u_0=-|\Omega_1|$ and $\bar u_1=|\Omega_0|$ in \eqref{eq-nonnegative-2}, we know that
 \begin{equation}
 |E(\Omega_0,\Omega_1)|\geq|\Omega_0||\Omega_1|
 \end{equation}
 if \eqref{eq-nonnegative} holds. Because we always assume that the graph is simple, so $$E(\Omega_0,\Omega_1)=|\Omega_0||\Omega_1|.$$
 Moreover, $w_{xy}=1$,$\deg_{\Omega_1}(x)=|\Omega_1|$ and $\deg_{\Omega_0}(y)=|\Omega_0|$ for all $x\in \Omega_0$ and $y\in \Omega_1$. Substituting all these into \eqref{eq-nonnegative-2}, we know that the nonnegativity of \eqref{eq-nonnegative-2} reduces to that
 \begin{equation}
 \vv<dv_0,dv_0>_{\Omega_0}+\vv<dv_1,dv_1>_{\Omega_1}-(|\Omega_1|-|B|-|\Omega_0|)\vv<v_1,v_1>_{\Omega_1}\geq 0
 \end{equation}
 for any $v_0\in \R^{\Omega_0}$ and $v_1\in\R^{\Omega_1}$ with $\vv<v_0,1>_{\Omega_0}=\vv<v_1,1>_{\Omega_1}=0$. This is clearly equivalent to
 \begin{equation}
 \mu_2(\Omega_1)\geq |\Omega_1|-|\Omega_0|-|B|.
 \end{equation}
 This completes the proof of the corollary.
\end{proof}

Finally, we come to prove Theorem \ref{thm-F}.
\begin{proof}[Proof of Theorem \ref{thm-F}]
(1) By \cite[Theorem 1.2]{FR},
 \begin{equation}\label{eq-mu-1}
 \mu_i\geq 2-2\cos\frac{\pi}{2k+1}.
 \end{equation}
 Combining this with \eqref{eq-comparison}, we get \eqref{eq-not-divide}.

 When the equality of \eqref{eq-not-divide} holds, we know that the equality of \eqref{eq-mu-1} holds. When $|V(G)|=ik+1$, by the rigidity in \cite[Theorem 1.2]{FR},  $G$ is a star of degree $i$ with each arm of length $k$. Denote the center of the star as $o$, and the vertices of the $j^{\rm th}$ arm as $v_{j1},v_{j2},\cdots, v_{jk}$ for $j=1,2,\cdots, k$ (with $v_{jk}$ the end point of the arm). Then,
\begin{equation}
\mu_2=\mu_3=\cdots=\mu_i
\end{equation}
and the corresponding eigenspace is generated by
\begin{equation}\label{eq-eigen-1}
f_j(x)=\left\{\begin{array}{rl}0&x=o\\
f(s)&x=v_{1s}\\
-f(s)&x=v_{js}\\
0&{\rm otherwise}
\end{array}\right.
\end{equation}
for $j=2,3,\cdots,i$. Here $f$ is the first Dirichlet eigenfunction of the path on $k+1$ vertices: $0,1,2,\cdots, k$ with $0$ the boundary vertex equipped with the unit weight. Without loss of generality, we can assume that $f(i)>0$ when $i=1,2,\cdots,k$.

Moreover, by theorem \ref{thm-comparison}, because $\sigma_i=\mu_i$ with $i\geq 2$, there must be an eigenfunction $v_i$ of $\mu_i$ such that $v_i|_\Omega=0$.  Because the eigenspace of $\mu_i$ is generated by the functions listed in \eqref{eq-eigen-1}, we know that every eigenfunction of $\mu_i$ must be positive on at least one arm (except the center).  Combining this with the fact that we require two boundary vertices not adjacent to each other, one has $k=1$ and the conclusion follows.

(2) By \cite[Theorem 1.4]{FR},
\begin{equation}\label{eq-mu-2}
\mu_i\geq \mathcal P(k,\lambda_i).
\end{equation}
Combining this with \eqref{eq-comparison}, one has
\begin{equation}\label{eq-sigma}
\sigma_i\geq \mathcal P(k,\lambda_i).
\end{equation}
If the equality holds, then by \eqref{eq-comparison}, the equality of \eqref{eq-mu-2} holds. Then, when $k>1$ or $|V(G)|$ is even, by the rigidity in \cite[Theorem 1.4]{FR}, we know that $G$ is a comb of degree $i$ with each tooth of length $k-1$ (For the definition of a comb, see \cite[P. 2]{FR}). Let the path on $v_{11},v_{21},\cdots, v_{i1}$ be the base of the comb and the path on $v_{j1},v_{j2},,\cdots, v_{jk}$ be the tooth on $v_{j1}$ for $j=1,2,\cdots, i$. Let $g$ be the first Dirichlet eigenfunction of $(P_{k+1}, m,w,B)$ which is positive except on the boundary vertex and $h$ be a top eigenfunction for the base of the comb.  Then, $\mu_i$ is a simple eigenvalue with eigenfunction
\begin{equation}\label{eq-eigen-2}
f(v_{rs})=g(s)h(r)
\end{equation}
for $r=1,2,\cdots,i$ and $s=1,2,\cdots, k$. Moreover, by Theorem \ref{thm-comparison}, $f$ must be vanished on $\Omega$. This implies that $\Omega$ is empty because $f$ is everywhere non-vanished which violates that $\Omega$ is not empty. Hence, the equality of \eqref{eq-sigma} can not hold.

When $k=1$ and $|V(G)|$ is odd, then $G$ is a path or a cycle. Because both of the top eigenfunction of a path and a cycle will not be vanished on any vertex, the same reason as before implies that the equality \eqref{eq-sigma} can not hold.
Thus, we complete the proof of the conclusion.
\end{proof}
\begin{rem}
In \cite{FR}, the author claim that the equality of \eqref{eq-mu-2} holds only when the graph is a comb with $i$ teeth and with each tooth of length $k-1$. However, this is not true when $i=|V(G)|$ and $|V(G)|$ is odd. In fact, it is not hard to see that in this case $G$ can be either a path or a cycle because the top eigenvalues of the path and the cycle with an odd number of vertices are the same (see \cite[P. 9]{BR}). By the argument in \cite{FR}, it is not hard to see that these are the only two graphs that the equality of \eqref{eq-mu-2} holds in this case.
\end{rem}

\end{document}